\newcommand{\CAT}{\mathop {\mathrm{CAT}}\nolimits} 
\newcommand{\wass}{\mathop {\mathscr{W}_2}\nolimits}
\newcommand{\supp}{\mathop {\mathrm{supp}}\nolimits}
\begin{document}

\title{A geometric study of Wasserstein spaces:\\
 an addendum on the boundary}

\author{J\'er\^ome Bertrand\inst{1} \and Beno\^{\i}t R. Kloeckner\inst{2}
  \thanks{Both authors where partially supported by ANR grant ANR-11-JS01-0011.}}

\institute{Institut de Math\'ematiques, Universit\'e Paul Sabatier (Toulouse) et CNRS 
  \email{bertrand@math.univ-toulouse.fr} \and
       Institut Fourier, Universit\'e Joseph Fourier (Grenoble) et CNRS
   \email{benoit.kloeckner@ujf-grenoble.fr}}

\maketitle

\begin{abstract}
We extend the geometric study of the Wasserstein space $\wass(X)$
of a simply connected, negatively curved metric space $X$
by investigating which pairs
of boundary points can be linked by a geodesic, when $X$ is a tree.
\end{abstract}

Let $X$ be a \emph{Hadamard space}, by which we mean that $X$ is a complete
globally $\CAT(0)$, locally compact metric space. Mainly, $X$ is
a space where triangles are ``thin'': points on the opposite side
to a vertex are closer to the vertex than they would be in the
Euclidean plane. This assumption can also be interpreted as $X$
having non-positive curvature, in a setting more general than manifolds;
it has a lot of consequences (the distance is convex, $X$ is contractible,
it admits a natural boundary and an associated compactification, \dots)
An important example of Hadamard space, on which we shall focus in this paper,
is simply an infinite tree.

The set of Borel probability measures of $X$ having finite second moment
can be endowed with a natural distance defined using optimal transportation,
giving birth to the Wasserstein space $\wass(X)$. It is well-known that
$\wass(X)$ does not have non-positive curvature even when $X$ is a tree.

This note is an addendum to \cite{Bertrand-Kloeckner}, where we defined
and studied the boundary of $\wass(X)$. We refer to
that article and references therein for the background both on Hadamard space
and optimal transportation, as well as for notations. Note that a previous
(long) version of \cite{Bertrand-Kloeckner} contained the present content,
but has been split after remarks of a referee.

Let us quickly sum up the content of \cite{Bertrand-Kloeckner}. The boundary of
$X$ can be defined by looking at geodesic rays, and identifying rays that stay
at bounded distance one to another (``asymptote'' relation). We showed that
there is a natural
boundary $\partial \wass(X)$ of the Wasserstein space that is both close 
to the traditional boundary of Hadamard spaces 
(a boundary point can be defined as an asymptote class of rays)
and relevant to optimal transportation (a boundary point can be seen as a measure
on the cone over $\partial X$, encoding the asymptotic direction and speed distribution of the
mass along a ray). This boundary can be given a topology consistent with both points
of view, and an angular metric; unsurprisingly, it carries geometric information
about $\wass(X)$.

Here we adress the \emph{visibility}, or lack thereof, of $\wass(X)$. A Hadamard
space satisfies the visibility condition if any pair of boundary points can be linked 
by a geodesic (e.g. all trees have the visibility property),
and the same definition makes sense for its Wasserstein space.
It is easily seen that even when $X$ has the visibility condition, 
$\wass(X)$ does not; our result is a complete characterization of
pairs of asymptotic measures that are the ends of a complete geodesic
when $X$ is a tree (Theorem \ref{theo:endsrealizability} in Section \ref{sec:theo}).

Our motivation is twofold: first this result shows how much more constrained
complete geodesics of $\wass(X)$ are compared to complete rays; second
the method of proof involves cyclical monotonicity in an interesting way,
because we have to deal with an optimal transport problem that needs not have a finite
infimum.

\section{A first necessary condition: antipodality}

A complete geodesic $(\mu_t)$ in $\wass(X)$ defines two rays 
and one therefore gets two asymptotic measures, denoted by $\mu_{-\infty}$ and $\mu_{+\infty}$,
also called the \emph{ends} of the geodesic. We recall that these measures are probability
measures on the cone $c\partial X$ over the geodesic boundary of $X$. But
by Proposition 5.2 of \cite{Bertrand-Kloeckner},
these measures are in fact concentrated on $\partial X$, viewed as a subset of $c\partial X$.
In particular, $\wass(X)$ is already far from satisfying the visibility condition.

Note that we shall need to consider measures $\mu$ on the set of unit complete geodesics
$\mathscr{G}^{\mathbb{R}}_1(X)$ that
satisfy the cyclical monotonicity, but such that ${e_t}_\#\mu$ need not have finite second moment.
We still call such maps dynamical transport plan and we say that ${e_{\pm\infty}}_\#\mu$
are its ends. Such a measure $\mu$ defines a complete unit geodesic in $\wass(X)$ if and only if
${e_t}_\#\mu\in\wass(X)$ for some, hence all $t\in\mathbb{R}$. In this section,
we only consider \emph{unit} geodesics even if it is not stated explicitly.

The asymptotic formula (Theorem 4.2 of \cite{Bertrand-Kloeckner})
gives us a first necessary condition valid for any Hadamard space.
Let us say that two points $\zeta,\xi\in \partial X$ are \emph{antipodal} if they are linked by a 
geodesic, that two sets $A_-,A_+\subset\partial X$ are antipodal
if all pairs $(\zeta,\xi)\in A_-\times A_+$ are antipodal, and that
two measures $\nu_-,\nu_+$ on $\partial X$ are antipodal when they are concentrated on 
antipodal sets. Morever, let us call \emph{uniformly antipodal} a pair of measures
whose supports are antipodal.

Given a complete unit geodesic $\mu$, the asymptotic formula readily implies 
that \emph{the ends of any complete unit geodesic of $\wass(X)$ must be antipodal}.

When $X$ is a tree, every pair of boundary points is antipodal
and this condition simply reads that the ends must be concentrated on disjoint sets.

\section{Flows and antagonism}

From now on, $X$ is assumed to be a tree, described as a graph by a couple
$(V,E)$ where: $V$ is the set of vertices;
$E$ is the set of edges, each endowed with one or two endpoints
in $V$ and a positive length. Since $X$ is assumed to be complete, the edges
with only one endpoint are exactly those that have infinite length.
It is assumed that vertices are incident to $1$ or at least $3$ edges, so that
the combinatorial description of $X$ is uniquely determined by its metric structure.
Since $X$ is locally compact, as a graph it is then locally finite.
We fix a base point $x_0\in X$ and use $d$ to denote the distance on $X$.

We say that two geodesics
are \emph{antagonist} if there are two distinct points $x,y$ such that one of the geodesics
goes through $x$ and $y$ in this order, and the other goes through the same points in the other order.


We add to each infinite end a formal endpoint at infinity to unify notations.
Each edge $e$ has two orientations $(xy)$ and $(yx)$ where $x,y$ are its endpoints.
The complement in $\bar X$ of the interior of an edge $e$ of endpoints $x,y$ has two components
$C_x(xy)\ni x$ and $C_y(xy)\ni y$. An oriented edge $(xy)$ has a
\emph{future} $(xy)_+:= C_y(xy)\cap\partial X $ and a \emph{past} $(xy)_-:= C_x(xy)\cap\partial X$.

Assume $\nu_-$ and $\nu_+$ are antipodal measures on $\partial X$.
Define a signed measure by $\nu=\nu_+-\nu_-$ and note that $\nu(\partial X)=0$.
The \emph{flow} (defined by $(\nu_-,\nu_+)$)
through an oriented edge $(xy)$ is $\phi(xy):=\nu((xy)_+)$. 
The flow gives a natural orientation of edges: an oriented edge is 
\emph{positive} if its flow is positive, \emph{neutral} if
its flow is zero, and \emph{negative} otherwise.

Given a vertex $x$, let $y_1,\ldots, y_k$ be the neighbors of $x$ such that
$(xy_i)$ is positive, and $z_1,\ldots,z_l$ be the neighbors of $x$ such that 
$(xz_j)$ is negative. Then $\sum_i \phi(xy_i)=\sum_j \phi(z_jx)$ is called
the flow through $x$ and is denoted by $\phi(x)$. If $x\neq x_0$,
then there is a unique edge starting at $x$ along which the distance
to $x_0$ is decreasing. If this edge is a positive one, $(xy_{i_0})$ say,
then define the \emph{specific flow} through $x$ as 
$\phi^0(x)=\sum_{i\neq i_0} \phi(xy_i)$. If this edge is a negative one,
$(xz_{j_0})$, then let $\phi^0(x)=\sum_{j\neq j_0} \phi(z_jx)$. If this edge
is neutral or if $x=x_0$, then let $\phi^0(x)=\phi(x)$.
Note that 
$\phi(xy)=-\nu((xy)_-)=-\phi(yx)$.

Given
a dynamical transport plan $\mu$, we denote by $\mu(xy)$ the $\mu$-measure
of the set of geodesics that go through an edge $(xy)$ in this orientation,
by $\mu(x)$ the $\mu$-measure of the set of geodesics that pass at $x$,
and by $\mu^0(x)$ the $\mu$-measure of those that are moreover closest to
$x_0$ at this time.
\begin{lemma}\label{lemm:flows}
If $\mu$ is any dynamical transport plan with ends $\nu_\pm$,
then:
\begin{enumerate}
\item for all edge $(xy)$ we have $\mu(xy)\geqslant \max(\phi(xy),0)$,
\item for all vertex $x$ we have $\mu(x)\geqslant\phi(x)$.
\end{enumerate}
and each of these inequality is an equality for all $(xy)$, respectively all
$x$, if and only if $\mu$ contains no pair of antagonist geodesics in its support.
In this case, we moreover have $\mu^0(x)=\phi^0(x)$ for all $x$.
\end{lemma}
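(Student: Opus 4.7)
The plan is to extract one algebraic identity from a decomposition argument, and then derive everything else by bookkeeping. The identity is
\[
\phi(xy) = \mu(xy) - \mu(yx)
\]
for every oriented edge. To establish it, decompose $\mu$ according to the pair of half-trees $(xy)_-,(xy)_+$ in which the two ends of the geodesic lie; this gives a partition since $\partial X = (xy)_-\sqcup(xy)_+$. Pushing forward by $e_{\pm\infty}$, the component with both ends in $(xy)_+$ contributes identically to $\nu_+((xy)_+)$ and to $\nu_-((xy)_+)$; the component with $-\infty$-end in $(xy)_-$ and $+\infty$-end in $(xy)_+$ contributes only to $\nu_+((xy)_+)$, with total mass $\mu(xy)$; and symmetrically the reverse component contributes mass $\mu(yx)$ only to $\nu_-((xy)_+)$. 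Subtracting gives the identity, and assertion (1) follows from $\mu(xy), \mu(yx) \ge 0$.

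For (2) I would carry out the analogous bookkeeping at a vertex $x$. Every geodesic through $x$ has one entry and one exit edge among the neighbors, so $\mu(x) = \sum_m \mu(xw_m) = \sum_m \mu(w_m x)$; and flow conservation $\sum_m \phi(xw_m) = \nu(\partial X) = 0$ holds because the halves $(xw_m)_+$ partition $\partial X$. Substituting the edge identity into $\phi(x) = \sum_i \phi(xy_i)$ and rearranging yields
\[
\mu(x) - \phi(x) = \sum_i \mu(y_i x) + \sum_j \mu(x z_j) + \sum_n \mu(x w_n),
\]
where $w_n$ runs over the neutral neighbors of $x$. The right-hand side is manifestly nonnegative, proving (2), and vanishes iff each term does.

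The main obstacle is the equivalence with the absence of antagonist pairs in $\supp\mu$, which is the geometric content of the lemma. In one direction, an antagonist pair through distinct points $p,q$ traverses the unique $X$-path from $p$ to $q$ in opposite senses, so some edge is used in both orientations, and the inequality is then strict at that edge and at its endpoints. Conversely, if equality fails at $(xy)$ then $\mu(xy), \mu(yx) > 0$, and since the set of geodesics using $(xy)$ in a given orientation is open, $\supp\mu$ contains a geodesic of each, which form an antagonist pair. The vertex version follows from the previous display: each positive summand forces, via the edge identity, both orientations of the underlying edge to carry $\mu$-mass, producing an antagonist pair.

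For $\mu^0(x) = \phi^0(x)$ under the no-antagonist hypothesis, observe that a geodesic through $x$ is at its closest point to $x_0$ exactly when it does not use the edge $(xu_0)$ pointing toward $x_0$; hence $\mu^0(x) = \mu(x) - \mu(xu_0) - \mu(u_0 x)$. For $x\ne x_0$, exactly one of $\mu(xu_0), \mu(u_0 x)$ is nonzero and it equals $|\phi(xu_0)|$ by the edge identity (both vanish if $(xu_0)$ is neutral). Substituting $\mu(x) = \phi(x)$ and splitting $\phi(x)$ according to whichever of its two defining sums contains the $u_0$-term recovers $\phi^0(x)$ in each of the three cases; the case $x = x_0$ is trivial.
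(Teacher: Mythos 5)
Your proposal is correct and follows essentially the same route as the paper: the key identity $\phi(xy)=\mu(xy)-\mu(yx)$, obtained by splitting the geodesics according to which side of the edge their two ends lie on, is exactly the paper's argument for point (1). The remaining parts (the vertex inequality, the equality/antagonism equivalence, and $\mu^0(x)=\phi^0(x)$), which the paper dismisses as ``similar'', are worked out by you correctly along the intended lines.
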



\begin{proof}
We prove the first point, the other ones are similar.
Denote by $\mu(C_y(xy))$  the measure of the set of geodesic
that lie entirely in $C_y(xy)$. We have
\[\mu(xy)+\mu(C_y(xy))=\nu_+((xy)_+)=\phi(xy)+\nu_-((xy)_+)\]
and
\[\nu_-((xy)_+)=\mu(C_y(xy))+\mu(yx).\]
It follows that $\phi(xy)=\mu(xy)-\mu(yx)$ so that $\mu(xy)\geqslant\phi(xy)$.
Moreover the case of equality $\mu(xy)=\max(\phi(xy),0)$ implies that $\mu(yx)=0$
whenever $\mu(xy)>0$, and we get the conclusion.
\end{proof}


\begin{lemma}\label{lemm:antagonism}
A dynamical transport plan $\mu$ is $d^2$-cyclically monotone if and only if
$\mu\otimes\mu$-almost no pairs of geodesics are antagonist.
\end{lemma}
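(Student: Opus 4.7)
The plan is to reduce $d^2$-cyclical monotonicity of $\mu$ to a pairwise four-point inequality and then identify its failure with antagonism via a direct distance computation in the tree. For $\gamma_1,\gamma_2\in\supp\mu$ and times $s<t$, set $p_i=\gamma_i(s)$, $q_i=\gamma_i(t)$ and consider
\[\Delta(\gamma_1,\gamma_2;s,t):= d^2(p_1,q_1)+d^2(p_2,q_2)-d^2(p_1,q_2)-d^2(p_2,q_1).\]
The standard decomposition of permutations into transpositions reduces $n$-cyclical monotonicity of $(e_s,e_t)_\#\mu$ to the pairwise condition $\Delta\leqslant 0$, so the lemma reduces to a pairwise statement at each time pair.

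For the ``if'' direction I would assume no pair in the support is antagonist and enumerate the possible tree configurations. In a tree, the segments $[p_1,q_1]$ and $[p_2,q_2]$ are either disjoint (possibly meeting at a single point) or share a common subsegment $[u,v]$ of positive length; in the latter case non-antagonism forces both geodesics to traverse $[u,v]$ in the same direction. Expanding the four squared distances using the sum-of-lengths formula for tree distances, I expect the following. In the disjoint case $\Delta=2L^2-A^2-B^2$ with $L=t-s$ and $A+B=2L+2\ell$ (where $\ell\geqslant 0$ is the bridge length), so $A^2+B^2\geqslant(A+B)^2/2\geqslant 2L^2$ gives $\Delta\leqslant 0$. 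In the same-direction shared case $\Delta=2(a_1-a_2)(c_1-c_2)$ where $a_i,c_i$ are the lengths of $[p_i,q_i]$ before and after $[u,v]$; the unit-speed constraint $a_i+c_i=L-\ell$ yields $a_1-a_2=-(c_1-c_2)$, whence $\Delta=-2(a_1-a_2)^2\leqslant 0$.

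For the ``only if'' direction I would suppose $\mu\otimes\mu$ charges a positive-measure set of antagonist pairs. By openness of antagonism one can pick $\gamma_1,\gamma_2$ antagonist in $\supp\mu$; let $[u,v]$ be their maximal shared subsegment, of length $\ell>0$, with $\gamma_1$ going $u\to v$ at times $\tau_1^u<\tau_1^v$ and $\gamma_2$ going $v\to u$ at times $\tau_2^v<\tau_2^u$. For $T>0$, take $s=\min(\tau_1^u,\tau_2^v)-T$ and $t=\max(\tau_1^v,\tau_2^u)+T$; by maximality the four points $p_1,q_2$ lie in pairwise distinct branches off $u$ and $p_2,q_1$ off $v$, so the crossed distances $d(p_1,q_2), d(p_2,q_1)$ bypass $[u,v]$ and the same expansion yields
\[\Delta=2\ell(2L-\ell)-2(\tau_1^u-\tau_2^v)^2,\]
with $L=t-s$. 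Since $\ell>0$ is fixed and $\tau_1^u-\tau_2^v$ is bounded, taking $T$ large gives $\Delta>0$, contradicting cyclical monotonicity.

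The main obstacle will be the bookkeeping of the tree configurations in the ``if'' direction, in particular verifying that the non-antagonism assumption really closes all sub-cases (nested shared subsegments, degenerate $\ell=0$ meet-at-one-point cases, etc.) and justifying the reduction to pairwise monotonicity; the ``only if'' argument is shorter but depends on the ability to prolong the geodesics to arbitrarily negative and positive times, which is automatic here since $\mu$ is supported on complete bi-infinite geodesics.
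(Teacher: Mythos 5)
Your ``only if'' direction is sound: it is the same swap argument the paper uses (two bracketing times and one transposition), just made quantitative, and a transposition is of course a legitimate cycle, so no reduction is needed there. The genuine gap is in the ``if'' direction, namely the claim that ``the standard decomposition of permutations into transpositions reduces $n$-cyclical monotonicity to the pairwise condition $\Delta\leqslant 0$''. This reduction is false in general: cyclical monotonicity requires $\sum_i d^2(p_i,q_i)\leqslant\sum_i d^2(p_i,q_{\sigma(i)})$ for pairs $(p_i,q_i)$ taken in the support of $(e_s,e_t)_\#\mu$, and after performing one swap the intermediate pairs $(p_i,q_{\sigma(i)})$ are in general no longer supported pairs, so your two-point inequality cannot be iterated along a decomposition into transpositions. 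What your (correct) computations of $\Delta$ in the disjoint and same-direction configurations establish is only $c$-monotonicity, which is strictly weaker than $c$-cyclical monotonicity for a general cost (already for the quadratic cost on $\mathbb{R}^2$ there are pairwise monotone plans that are not cyclically monotone). You flag this reduction as an ``obstacle'' to be checked, but it is not bookkeeping: it is the missing core of the proof, and as stated it cannot be repaired by enumerating more tree configurations of two geodesics.

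The paper sidesteps the combinatorics entirely. Assuming no antagonist pairs in the support, Lemma \ref{lemm:flows} shows every geodesic of $\supp\mu$ runs only through positive edges, and since the tree has no cycles one can build a $1$-Lipschitz time function $\tau:X\to\mathbb{R}$ that is isometric along each such geodesic. Then $\tau_\#\mu_s$ is the translate of $\tau_\#\mu_r$ by $s-r$, so every plan between $\mu_r$ and $\mu_s$ has cost at least $(r-s)^2$, while $(e_r,e_s)_\#\mu$ costs exactly $(r-s)^2$; it is therefore optimal, hence cyclically monotone, for all $r<s$ at once and for cycles of every length. If you wish to keep your route, you must prove the inequality for arbitrary cycles, not just transpositions. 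Two smaller points: the passage from ``$\mu\otimes\mu$-almost no antagonist pairs'' to ``no antagonist pair in the support'' is where openness of the antagonism relation is actually needed (not in the converse direction, where a positive-measure set automatically meets the support); and in your ``only if'' computation the intersection of two antagonist geodesics may be an infinite ray, in which case the maximal shared subsegment $[u,v]$ does not exist and your bypass formula must be adapted (the swap is then even cheaper, so the conclusion persists).
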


\begin{proof}
Assume that the support of $\mu$
contains two antagonist geodesics $\gamma,\beta$ and let $x,y$ be points
such that $\gamma_t=x,\gamma_u=y$ where $u>t$ and $\beta_v=y,\beta_w=x$
where $w>v$. Let $r=\min(t,v)$ and $s=\max(u,w)$. Then 
\[d(\gamma_r,\beta_s)^2+d(\gamma_s,\beta_r)^2<d(\gamma_r,\gamma_s)^2+d(\beta_r,\beta_s)^2)\]
so that the transport plan $(e_r,e_s)_\#\mu$ between $\mu_r$ and $\mu_s$ would not
be cyclically monotone (see Figure \ref{fig:nonoptimal}).

 \begin{figure}[htp]
 \begin{minipage}[c]{.5\linewidth}
    \input{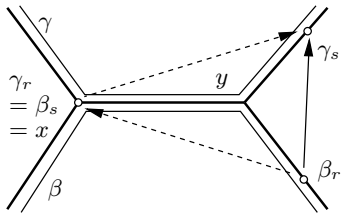}
 \end{minipage}
 \begin{minipage}[l]{.45\linewidth} 
    \caption{The transport plan
    corresponding to the solid arrow is cheaper than the one corresponding to the dashed 
    arrows.}\label{fig:nonoptimal}
 \end{minipage}
 \end{figure}

Assume now that $\mu\otimes\mu$-almost no pairs of geodesics are antagonist.
Let $\tau: X\to \mathbb{R}$ be a function that is continuous, increasing isometric on each
positive edge and constant on neutral edges. Such a function can be defined locally around
any point, and we can design it globally since $X$ has no cycle. By Lemma \ref{lemm:flows},
we see that $\tau$ is isometric when restricted to any geodesic in the support of $\mu$
(such a geodesic must go through positive edges only).
Given times $r<s$, the only $|\cdot|^2$-cyclically monotone transport plan in $\mathbb{R}$
from $\tau_\#\mu_r$ to $\tau_\#\mu_s$ is known to be the increasing rearrangement by convexity of
the cost. Here $\tau_\#\mu_s$ is the $r-s$ translate of $\tau_\#\mu_r$, so that this transport
plan has cost $(r-s)^2$. But $\tau$ is $1$-Lipschitz, so that any transport plan from
$\mu_r$ to $\mu_s$ has cost at least $(r-s)^2$. This proves the cyclical monotonicity of
$\mu$.
\end{proof}

Note that here for example, $\mu_r$ and $\mu_s$ need not have finite second moment; however
$\mu$ induces a transport plan with finite cost between them, and that peculiarity has
therefore no incidence on the proof.

\section{Gromov product}

Before we state the main result, let us turn to a second point of view.

Given $\xi_-\neq\xi_+$ in $\partial X$, we denote by 
$(\xi_-,\xi_+)\subset X$ the locus of a geodesic whose ends are $\xi_-$ and $\xi_+$.
Then we write $D_0 (\xi_-,\xi_+)$
the distance between the base point $x_0$ and the geodesic $(\xi_-,\xi_+)$.
Since $X$ is a tree, this quantity is
equal to what is usually called the \emph{Gromov product} $(\xi_-\cdot\xi_+)_{x_0}$, 
see e.g. \cite{BH};
however the present definition is adapted to our needs.  Set $D_0(\xi,\xi)=\infty$
and denote by $\gamma(\xi_-,\xi_+)$ the parametrized unit complete geodesic whose ends
are $\xi_\pm$ at $\pm\infty$, and such that its time $0$ realizes $D_0$:
$d(x_0,\gamma(\xi_-,\xi_+)_0)=D_0(\xi_-,\xi_+)$.

For any $\varepsilon>0$,
$e^{-\varepsilon D_0}$ metrizes the cone topology on
$\partial X$ so that
$D_0(\xi_n,\zeta)\to\infty$ if and only if $\xi_n\to\zeta$.
Moreover by compactness if $D_0(\xi_n,\zeta_n)\to\infty$ then
there are increasing indices $(n_k)$ such that $\xi_{n_k}$ and $\zeta_{n_k}$ converge to 
a common point. $D_0$ is continuous, and locally constant outside the diagonal; the map
\begin{eqnarray*}
F : \partial X\times \partial X &\to& \mathscr{G}^{\mathbb{R}}_1(X) \\
(\xi_-,\xi_+) &\mapsto& \gamma(\xi_-,\xi_+)
\end{eqnarray*}
is easily seen to be continuous.
when $\mathscr{G}^{\mathbb{R}}_1(X)$ is endowed
with the topology of uniform convergence on compact subsets. 

Since $F$ is a right inverse to $(e_{-\infty},e_{+\infty})$,
a transport plan $\Pi\in\Gamma(\nu_-,\nu_+)$ can always be written
$(e_{-\infty},e_{+\infty})_\#\mu$ by taking $\mu=F_\#\Pi$.
We shall denote also by $D_0$ the map $\gamma\mapsto d(x_0,\gamma)$ where $\gamma$
is any parametrized or unparametrized complete geodesic.

\begin{lemma}\label{lemm:antagonism2}
A transport plan $\Pi_0\in\Gamma(\nu_-,\nu_+)$ such that 
\[\int -D_0^2\,\Pi_0>-\infty\]
is $-D_0^2$-cyclically monotone
if and only if $F_\#\Pi_0$ contains no pair of antagonist geodesics in its
support. In this case, $\Pi_0$ is a solution to the optimal transport problem 
\begin{equation}
\inf_{\Pi \in \Gamma(\nu^-,\nu^+)} \int -D_0^2(\xi,\zeta) \,\Pi(d\xi,d\zeta).
\label{eq:D02problem}
\end{equation}
\end{lemma}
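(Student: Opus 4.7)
The plan is to split the proof into a short 2-point swap argument for the forward direction, and a Fubini-based optimality argument that yields both the backward direction and the claim that $\Pi_0$ solves \eqref{eq:D02problem}.

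For the forward direction ($\Rightarrow$), suppose two antagonist geodesics $\gamma_i=F(\xi_i^-,\xi_i^+)$, $i=1,2$, lie in the support of $F_\#\Pi_0$, sharing an arc $[x,y]$ traversed in opposite directions (say $\gamma_1$ goes $x\to y$ and $\gamma_2$ goes $y\to x$). Then $\xi_1^-,\xi_2^+\in (xy)_-$ while $\xi_2^-,\xi_1^+\in (xy)_+$, so the swapped geodesics $F(\xi_1^-,\xi_2^+)$ and $F(\xi_2^-,\xi_1^+)$ stay respectively inside $C_x(xy)$ and $C_y(xy)$. A direct case analysis on the position of the projection of $x_0$ onto $\gamma_1\cup\gamma_2$ (whether it lies in $[x,y]$ or on one of the four half-geodesics past $x$ or past $y$) then yields the strict inequality
\[
D_0^2(\xi_1^-,\xi_1^+)+D_0^2(\xi_2^-,\xi_2^+)\;<\;D_0^2(\xi_1^-,\xi_2^+)+D_0^2(\xi_2^-,\xi_1^+),
\]
contradicting $2$-cyclical monotonicity of the cost $-D_0^2$.

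For the backward direction ($\Leftarrow$) together with the optimality claim, I would use the elementary identity $D_0^2=\int_0^{D_0}2s\,ds$, interpreted as an integral of arclength along the arc from $x_0$ to $F(\xi_-,\xi_+)_0$ in $X$. Fubini then gives, for any $\Pi\in\Gamma(\nu_-,\nu_+)$,
\[
\int D_0^2\,d\Pi \;=\; \int_X 2\,d(x_0,t)\cdot \Pi\bigl(\{(\xi_-,\xi_+):t\in[x_0,F(\xi_-,\xi_+)_0]\}\bigr)\,dt.
\]
For $t$ interior to an oriented edge $(xy)$ with $y$ on the far side of $x_0$, the event in the integrand is exactly $(\xi_-,\xi_+)\in(xy)_+\times(xy)_+$: in a tree, the geodesic between two such boundary points stays in $C_y(xy)$ and hence avoids $t$. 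So the integrand equals $\Pi((xy)_+\times(xy)_+)$, which is bounded above by $\min\bigl(\nu_-((xy)_+),\nu_+((xy)_+)\bigr)$ for any coupling, by the marginal constraints. When $F_\#\Pi_0$ contains no antagonist pair, Lemma \ref{lemm:flows} gives $\mu(yx)=\max(-\phi(xy),0)$ on every edge, and combining this with the identity $\nu_-((xy)_+)=\Pi_0((xy)_+\times(xy)_+)+\mu(yx)$ produces the matching equality $\Pi_0((xy)_+\times(xy)_+)=\min\bigl(\nu_-((xy)_+),\nu_+((xy)_+)\bigr)$. Hence $\int D_0^2\,d\Pi_0$ attains the maximum over $\Gamma(\nu_-,\nu_+)$, so $\Pi_0$ solves \eqref{eq:D02problem}. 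Cyclical monotonicity then follows by applying the same optimality statement to empirical subplans: any $n$ points in $\supp(\Pi_0)$ define a non-antagonist coupling of their empirical marginals, which is therefore optimal against every permutation of the second coordinate, i.e.\ the cyclical monotonicity inequality holds.

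The main obstacle is translating the support-level condition of non-antagonism (via Lemma \ref{lemm:flows}) into the measure-theoretic equality $\Pi_0((xy)_+^2)=\min(\nu_-,\nu_+)((xy)_+)$; once this is combined with the Fubini identity above, both optimality and cyclical monotonicity fall out cleanly, and the forward direction reduces to a $2$-point computation.
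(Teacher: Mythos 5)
Your argument is correct, and for the substantive half of the lemma (non-antagonism implies optimality, hence cyclical monotonicity) it takes a genuinely different route from the paper. The paper proceeds by contradiction: a strictly better competitor $\Pi_1$ is de-antagonized edge by edge, a weak limit $\tilde\Pi$ is extracted, truncations $f_T=-\min(D_0^2,T)$ and monotone convergence show the strict improvement survives the limit, and this contradicts the identity $\int -D_0^2\,\Pi_0=\sum_x -d(x_0,x)^2\mu^0(x)$ combined with $\mu^0=\phi^0$ from Lemma~\ref{lemm:flows}. You bypass all of that approximation and weak-convergence machinery: your layer-cake identity writes $\int D_0^2\,d\Pi$ as $\int_X 2\,d(x_0,t)\,\Pi\bigl((xy)_+\times(xy)_+\bigr)\,dt$ (with $(xy)$ the edge of $t$ oriented away from $x_0$), the integrand is bounded by $\min\bigl(\nu_-((xy)_+),\nu_+((xy)_+)\bigr)$ for \emph{every} coupling, and the equality case of Lemma~\ref{lemm:flows} shows a non-antagonist plan attains this bound edge-wise; this yields optimality against all competitors in one stroke, with an explicit optimal value --- it is the continuous counterpart of the paper's discrete formula above and connects directly to condition~3 of Theorem~\ref{theo:endsrealizability}. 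Getting cyclical monotonicity by applying the same optimality statement to empirical subplans is also sound and differs from the paper, which instead uses that a non-cyclically-monotone plan of finite cost cannot be optimal; you should record explicitly that $\xi_i^-\neq\xi_j^+$ for distinct support pairs (otherwise a swapped $D_0$ is infinite), which holds because two geodesics of $\supp F_\#\Pi_0$ sharing an endpoint at infinity with these orientations traverse a common ray in opposite directions and are therefore antagonist. The forward direction is the paper's two-point swap (merely asserted there), and your strict inequality does hold in every case of the projection's position, strictness coming from $d(x,y)>0$; the remaining imprecisions (the event $\{t\in[x_0,F(\xi_-,\xi_+)_0]\}$ coincides with $(xy)_+\times(xy)_+$ only up to a set that is null after integration in $t$, and antagonist pairs in $\supp F_\#\Pi_0$ should be transferred to images of points of $\supp\Pi_0$ by openness of antagonism) are at the level of detail the paper itself glosses over.
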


\begin{proof}
Consider $\Pi_0\in\Gamma(\nu_-,\nu_+)$ and let $\mu=F_\#\Pi_0$.

Assume first that there are antagonist geodesics $\gamma,\beta$ in the support
of $\mu$. Then permuting $\gamma_{+\infty}$ and $\beta_{+\infty}$
contradicts the $-D_0^2$-cyclical monotonicity.

To prove the other implication, assume that $\supp\mu$ contains no pair
of antagonist geodesics but that $\Pi_0$ does not achieve the infimum
in \eqref{eq:D02problem}. This happens notably when $\Pi_0$ is not cyclically monotone.

Then there is some $\Pi_1\in\Gamma(\nu_-,\nu_+)$ such that 
$\int -D_0^2\,\Pi_0>\int -D_0^2\,\Pi_1$.
If $F_\#\Pi_1$ has couples of antagonist geodesics in its support,
then we can still improve $\Pi_1$. Choosing any numbering $e_2,e_3,\ldots$ of
the non-oriented edges of $X$, we inductively construct transport plans 
$\Pi_2,\Pi_3,\ldots$ in $\Gamma(\nu_-,\nu_+)$ such that $F_\#\Pi_k$ has no antagonist
geodesics through the edges $e_2,\ldots,e_k$ in its support, and 
$(-D_0^2)_\#\Pi_k([x,+\infty)) \leq (-D_0^2)_\#\Pi_{k-1}([x,+\infty))$ for all $x$ 
(proceed as follows: for all
$(\zeta,\xi)$ going through $e_k$ in the negative direction, replace $\xi$
by some $\xi'$ in the future of $e_k$ and corresponding to a $(\zeta',\xi')\in\supp\Pi_{k-1}$,
and replace $(\zeta',\xi')$ by $(\zeta',\xi)$; there are many choices to do but they can
be made in a arbitrary manner). Then we get
$\int -D_0^2\,\Pi_{k-1}\geqslant\int -D_0^2\,\Pi_k$
where some, or even all of this integrals can be negative infinite.

We shall use a weak convergence, but $-D_0^2$ is not bounded; we therefore introduce
the functions $f_T=-\min(D_0^2,T)$ for $T\in\mathbb{N}$. For all $T$,
the transport plans
$\Pi_k$ also satisfy 
\[\int f_T\,\Pi_{k-1}\geqslant\int f_T\,\Pi_k\]

Since $\partial X$ is compact, so is $\Gamma(\nu_-,\nu_+)$ and we can extract a
subsequence of $(\Pi_k)$ that weakly converges to some $\tilde\Pi$, and $\tilde\mu:=F_\#\tilde\Pi$ has
no pair of antagonist geodesics in its support.
 
The monotone convergence theorem implies that for $T$ large enough,
we have $\int f_T\,\Pi_1<\int -D_0^2\,\Pi_0$, and by weak convergence we get
$\int f_T\,\tilde\Pi\leqslant\int f_T\,\Pi_1$.
Since $-D_0^2\leqslant f_T$, we get
\[\int -D_0^2\,\tilde\Pi<\int -D_0^2\,\Pi.\]
But then by Lemma
\ref{lemm:flows} we get that $\tilde\mu^0(x)=\mu^0(x)$ for all $x\in V$,
and since $\int -D_0^2\,\Pi_0=\sum_x -d^2(x_0,x)\mu^0(x)$ it follows that
$\int -D_0^2\,\Pi_0$ and $\int -D_0^2\,\tilde\Pi$ must be equal, a contradiction.
\end{proof}

Note that the lemma stays true if $-D_0^2$ is replaced with any decreasing function
of $D_0$, but that we shall need precisely $-D_0^2$ later.

\begin{remark}
In this proof we cannot
use Theorem 4.1 of \cite{Villani} since we do not have the suitable lower bounds
on the cost.
\end{remark}

\section{Characterization of ends}\label{sec:theo}

We can now state and prove our result.

\begin{theorem}\label{theo:endsrealizability}
Assume that $X$ is a tree and let $\nu_-$, $\nu_+$ be two antipodal measures
on $\partial X$. The following are equivalent:
\begin{enumerate}
\item there is a complete geodesic in $\wass(X)$ with $\nu_\pm$ as ends;
\item\label{enumi:Gromov} the optimal transport problem \eqref{eq:D02problem} is finite:
\[\inf_{\Pi \in \Gamma(\nu^-,\nu^+)} \int -D_0^2(\xi,\zeta) \,\Pi(d\xi,d\zeta)
  >-\infty;\]
\item the specific flow defined by $\nu_\pm$ satisfies 
  \[\sum_{x\in V} \phi^0(x) d(x,x_0)^2 <+\infty.\]
\end{enumerate}
When these conditions are satisfied, then the optimal transport problem \eqref{eq:D02problem}
has a minimizer
$\Pi_0$ and $\Gamma_\#\Pi_0$ define a geodesic of $\wass(X)$ with the prescribed ends.

Moreover, the above conditions are satisfied as soon as $\nu_\pm$ are uniformly antipodal.
\end{theorem}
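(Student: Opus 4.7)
I establish the circular chain $(1)\Rightarrow(3)\Rightarrow(2)\Rightarrow(1)$, which simultaneously produces the minimizer $\Pi_0$ of problem \eqref{eq:D02problem} whenever the three equivalent conditions hold; the uniformly antipodal statement will then follow from a separate compactness argument about $D_0$ near the diagonal of $\partial X$.

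\textbf{$(1)\Rightarrow(3)$.} Let $\mu$ be a dynamical transport plan representing the complete geodesic, with ends $\nu_\pm$. Since $\mu$ is $d^2$-cyclically monotone, Lemma \ref{lemm:antagonism} ensures that $\supp\mu$ contains no pair of antagonist geodesics, and then Lemma \ref{lemm:flows} yields $\mu^0(x)=\phi^0(x)$ for every vertex $x$. Parametrize $\mu$ so that for each geodesic $\gamma$ in its support, $\gamma_0$ realizes the closest point of $\gamma$ to $x_0$ (which is a vertex of $X$). The assumed finite second moment of $\mu_0$ then decomposes as
\[
\int d(x_0,\gamma_0)^2\, d\mu(\gamma)=\sum_x \mu^0(x)\, d(x,x_0)^2=\sum_x \phi^0(x)\, d(x,x_0)^2,
\]
proving (3).

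\textbf{$(3)\Rightarrow(2)$ and existence of $\Pi_0$.} Starting from an arbitrary $\Pi_1\in\Gamma(\nu_-,\nu_+)$, I run the edge-by-edge antagonism-elimination procedure from the proof of Lemma \ref{lemm:antagonism2}, using the truncated costs $f_T=-\min(D_0^2,T)$ for cost control, and extract a weak subsequential limit $\Pi_0$ from the resulting sequence $(\Pi_k)$. Antagonism through any given pair of edges in prescribed orientations is a closed condition on the support of $F_\#\Pi$, so $\mu:=F_\#\Pi_0$ has no antagonist pair in its support. Applying Lemma \ref{lemm:flows} yields $\mu^0(x)=\phi^0(x)$, whence
\[
\int -D_0^2\, d\Pi_0 = -\sum_x \phi^0(x)\, d(x,x_0)^2 > -\infty
\]
by hypothesis (3). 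This proves (2), and Lemma \ref{lemm:antagonism2} identifies $\Pi_0$ as a minimizer of \eqref{eq:D02problem}.

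\textbf{$(2)\Rightarrow(1)$ and the uniformly antipodal case.} Granted (2), the construction above still produces a weak limit $\Pi_0$ whose associated $\mu=F_\#\Pi_0$ has no antagonist pair, so Lemma \ref{lemm:antagonism} makes $\mu$ a $d^2$-cyclically monotone dynamical transport plan; and by Lemma \ref{lemm:antagonism2} the value $\int D_0^2\, d\Pi_0$ equals the (finite) infimum in \eqref{eq:D02problem}, which is exactly the second moment of $\mu_0$. Since $d(x_0,\gamma_t)\leq d(x_0,\gamma_0)+|t|$, every $\mu_t$ lies in $\wass(X)$, so $\mu$ realizes a complete geodesic with the prescribed ends. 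When $\nu_\pm$ are uniformly antipodal, the disjoint compact subsets $\supp\nu_\pm\subset\partial X$ stay at positive distance from the diagonal for the metric $e^{-\varepsilon D_0}$; equivalently $D_0$ is bounded on $\supp\nu_-\times\supp\nu_+$, so every $\Pi\in\Gamma(\nu_-,\nu_+)$ has $\int -D_0^2\, d\Pi>-\infty$ and (2) holds. The main obstacle in the argument is the weak-limit construction of $\Pi_0$: one has to verify that the no-antagonism property survives weak convergence (it does, being a closed condition on supports) while controlling the unbounded cost via the truncations $f_T$, so that Lemma \ref{lemm:flows} can be invoked on the limiting $\mu$ to obtain the closed-form $-\sum_x\phi^0(x)\,d(x,x_0)^2$.
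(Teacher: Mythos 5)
Your argument is correct in substance and uses the same toolkit as the paper (Lemmas \ref{lemm:flows}, \ref{lemm:antagonism}, \ref{lemm:antagonism2}, the parametrization of $F$ at the closest point to $x_0$, and the identity $\int D_0^2\,d\Pi=\sum_x\mu^0(x)\,d(x,x_0)^2$), but the logical route is genuinely different. The paper proves $(1)\Rightarrow(2),(3)$; it obtains a minimizer from condition (2) by approximating $\nu_\pm$ by measures with disjoint supports, taking a Prokhorov limit of $-D_0^2$-cyclically monotone plans and using stability of cyclical monotonicity; and it proves $(3)\Rightarrow(1)$ by an explicit construction, turning the flow $\phi$ and the time function $\tau$ into measures $\sum_a\phi(a)\delta_a$ and reparametrizing so that the second moment at time $0$ is exactly $\sum_x\phi^0(x)d(x,x_0)^2$. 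You instead close the circle $(1)\Rightarrow(3)\Rightarrow(2)\Rightarrow(1)$ by running, under (3) and again under (2), the edge-by-edge antagonism-elimination plus weak-limit construction taken from inside the proof of Lemma \ref{lemm:antagonism2} on an arbitrary coupling, then evaluating the cost of the limit via Lemma \ref{lemm:flows} ($\mu^0=\phi^0$) for $(3)\Rightarrow(2)$, and noting for $(2)\Rightarrow(1)$ that finiteness of the cost of the limit plan is automatic once the infimum is finite. This buys uniformity (one construction yields both implications, the minimizer, and the geodesic) and avoids the approximation/Prokhorov step; the price is that everything rests on the elimination procedure, which in the paper appears only as a step inside a contradiction argument (so you should check, as you implicitly do, that it applies unconditionally and that absence of antagonism through each fixed edge passes to the weak limit), and you lose the explicit flow description of the geodesic that the paper's hand construction gives.

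One step needs repair, though it is harmless. In $(1)\Rightarrow(3)$ you reparametrize each geodesic of $\mu$ so that time $0$ is its closest point to $x_0$ and then write $\int d(x_0,\gamma_0)^2\,d\mu(\gamma)=\int d(x,x_0)^2\,\mu_0(dx)$: after reparametrization the time-$0$ marginal is no longer $\mu_0$ (the geodesics of a displacement interpolation are not all closest to $x_0$ at the same time), so the displayed equality is false as written. What you need, and what the paper uses, is the inequality $\sum_x\phi^0(x)d(x,x_0)^2=\sum_x\mu^0(x)d(x,x_0)^2\leqslant\int d(x,x_0)^2\,\mu_0(dx)<\infty$, valid because the closest point of $\gamma$ to $x_0$ is at distance at most $d(x_0,\gamma_0)$ in the original parametrization. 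Similarly, in $(2)\Rightarrow(1)$ you should say explicitly that $\int-D_0^2\,d\Pi_0\geqslant\inf>-\infty$ because every competitor dominates the infimum; this is immediate, but it is the hypothesis required to invoke Lemma \ref{lemm:antagonism2} and the only place where (2) actually enters.
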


\begin{proof}
First assume that there is a complete geodesic in $\wass(X)$ with $\nu_\pm$ as ends
and denote by $\mu$ one of its displacement interpolations;
by Lemma \ref{lemm:antagonism}, the support of $\mu$ does not contain
any pair of antagonist geodesics.
From Lemma \ref{lemm:flows} it follows that 
\[\sum_{x\in V} \phi^0(x) d(x,x_0)^2 \leqslant \int_X d(x,x_0)^2 \mu_0(dx)<\infty\]
since by hypothesis $\mu_0\in\wass(X)$.
We have also
\[\int -D_0^2(\xi,\zeta) \,\Pi_0(d\xi,d\zeta)\geqslant -\int_X d(x,x_0)^2 \mu_0(dx)>-\infty\]
so that Lemma \ref{lemm:antagonism2} implies that
$\Pi_0:=(e_{-\infty},e_{+\infty})_\#\mu$ is a solution to problem \ref{eq:D02problem}.

Now consider the case when $\nu_\pm$ are uniformly antipodal. Since the supports of $\nu^-$
and $\nu^+$ are disjoint, the map $D_0$, when 
restricted to $\supp \nu^- \times  \supp \nu^+$, is bounded. Therefore, since it is a continuous 
map, the optimal mass transport problem is well-posed and admits minimisers.

More generally when the infimum in problem  \eqref{eq:D02problem} is finite, 
by using the regularity of Borel 
probability measures on $\partial X$ we can approximate $\nu^-$ and $\nu^+$ by probability 
measures whose supports are disjoint sets. Then, the previous paragraph gives us a sequence 
of plans which are $-D_0^2$-cyclically monotone. Since $D_0$ is a continuous map, Prokhorov's 
theorem allows us to extract a converging subsequence whose limit $\Pi_0$ is $-D_0^2$-cyclically monotone.
By the finiteness assumption, 
\[\int -D_0^2(\xi,\zeta) \,\Pi_0(d\xi,d\zeta)>-\infty\]
and $\Pi_0$ is a $-D_0^2$-optimal transport plan.

As soon as a minimizer $\Pi_0$ to \eqref{eq:D02problem} exists,by Lemma \ref{lemm:antagonism2}
$\mu:=\Gamma_\#\Pi_0$ is a dynamical transport plan that has
no antagonist pair of geodesics in its support. By Lemma \ref{lemm:antagonism}
$\mu$ is cyclically monotone. By its definition
\[\int_X d(x,x_0)^2 \mu_0(dx)=\int D_0^2(\xi,\zeta) \,\Pi_0(d\xi,d\zeta)<+\infty\]
so that $\mu$ defines a geodesic of $\wass(X)$. It has the prescribed ends
since $\Pi_0\in\Gamma(\nu_-,\nu_+)$.

We have only left to consider the case when $\sum_{x\in V} \phi^0(x) d(x,x_0)^2 <\infty$.
For this, let us construct a suitable complete geodesic by hand.
Let $\tau$ be the time function such that $\tau(x_0)=0$, as in the proof of Lemma
\ref{lemm:antagonism}. 
The levels of the time function are finite unions of isolated points and of subtrees of $X$ all of
whose edges are neutral. Indeed, consider a point $a$: if it lies inside a neutral edge, then
all the edge has time $\tau(a)$. Otherwise, let $(xy)$ be the orientation of this edge that is positive:
points on $[x,a)$ have time lesser than $\tau(a)$, while points on $(a,y]$ have time greater
than $\tau(a)$. If $a$ is a vertex, then similarly one sees that nearby $a$, only the points lying on
an incident neutral edge can have time equal to $\tau(a)$.
Let $\dot\tau^{-1}(t)$ be the union of the isolated points of the level
$\tau^{-1}(t)$ and of the points that lie (in $X$) on the boundary of the neutral subtrees of the same level.
In other words, $\dot\tau^{-1}(t)$ is the level $t$ of the map induced by $\tau$ on the subforest
of $X$ where all neutral edges have been removed.

Define now 
\[\tilde\mu_t=\sum_{a\in\dot\tau^{-1}(t)} \phi(a)\,\delta_a\]
where $\phi(a)=\phi(xy)$ if $a$ lies inside a positive edge $(xy)$.
Note that $\tilde\mu_t$ is a probability measure thanks to the antipodality
of $\nu_-$ and $\nu_+$: without it, it would have mass less than $1$.
It is a good first candidate to be the geodesic we are looking for, except the
second moment of $\mu_t$ need not be finite! To remedy this problem, proceed as follows.
First, there is a displacement interpolation $\tilde\mu$ of $(\tilde\mu_t)$, which is
a probability measure on $\mathscr{G}^{\mathbb{R}}_1(X)$.
 Now, construct a random geodesic $\gamma$ as follows : draw $\tilde\gamma$
with law $\tilde\mu$, and let $\gamma$ be the geodesic that has the same geometric locus
and the same orientation as $\tilde\gamma$, and such that $\gamma$ is nearest to $x_0$
at time $0$. The condition $\sum_{x\in V} \phi^0(x) d(x,x_0)^2 <\infty$ ensures that the law of
$\gamma_0$ has finite second moment and $(\mu_t)$ is the desired geodesic.
\end{proof}

The example shown in Figure \ref{fig:notends} shows that antipodality is not sufficient
for $\nu_\pm$ to be the ends of a geodesic.

\begin{figure}[htp]\begin{center}
\input{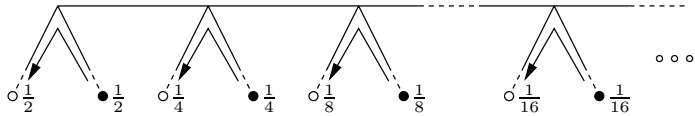}
\caption{The measures $\nu_-$ (black dots) and $\nu_+$ (white dots) are antipodal. However, the only possible geodesics
  having these measures as ends, depicted by simple arrows, are not in
  $\wass(X)$ if the horizontal edges are long enough.}\label{fig:notends}
\end{center}\end{figure}

As a last remark, let us stress that the condition \ref{enumi:Gromov}
in Theorem \ref{theo:endsrealizability} is clearly necessary
when $X$ is a general Hadamard space, but it might not be a
sufficient condition in general. 

\bibliographystyle{smfalpha}
\bibliography{biblio.bib}

\end{document}